\begin{document}


\theoremstyle{plain}
\newtheorem*{Claim}{Claim}
\newtheorem{theorem}{Theorem} [section]
\newtheorem{corollary}[theorem]{Corollary}
\newtheorem{lemma}[theorem]{Lemma}
\newtheorem{proposition}[theorem]{Proposition}
\newtheorem{thmx}{Theorem}
\renewcommand{\thethmx}{\Alph{thmx}}


\theoremstyle{definition}
\newtheorem{definition}[theorem]{Definition}

\theoremstyle{remark}
\newtheorem{remark}[theorem]{Remark}

\numberwithin{theorem}{section}
\numberwithin{equation}{section}
\numberwithin{figure}{section}

\newcommand{\cle}{\left[}
\newcommand{\cri}{\right]}
\newcommand{\pl}{\left(}
\newcommand{\pr}{\right)}
\newcommand{\Rc}{\text{\rm Rc}}  
\newcommand{\Rm}{\text{\rm Rm}}  

\newcommand{\Sph}{\mathbb{S}}
\newcommand{\R}{\mathbb{R}}
\newcommand{\D}{\nabla}
\newcommand{\He}{\text{\rm Hess}}
\newcommand{\la}{\langle}
\newcommand{\ra}{\rangle}
\newcommand{\Si}{\Sigma}
\newcommand{\dint}{\displaystyle\int}
\newcommand{\dvol}{d\text{\rm vol}}

\newcommand{\dsigma}{\text{\rm d}\sigma}
\newcommand{\dt}{\frac{\text{\rm d}}{\text{\rm d}t}}
\newcommand{\dr}{\frac{\text{\rm d}}{\text{\rm d}r}}
\newcommand{\ddt}{\frac{\text{\rm d}^2}{\text{\rm d}t^2}}
\newcommand{\ddtzero}{\left.\frac{\text{\rm d}^2}{\text{\rm d}t^2}\right|_{t=0}}
\newcommand{\Ham}{\text{\rm H}}
\newcommand{\rhobt}{\overline{\rho_t}}

\title[Almost-rigidity and the extinction time]{Almost-rigidity and the extinction time of positively curved Ricci flows}

\author[R. Bamler]{Richard H. Bamler}

\address{Department of Mathematics, University of California, Berkeley}

\email{rbamler@math.berkeley.edu}

\author[D. Maximo]{Davi Maximo}

\address{Department of Mathematics, Stanford University}

\email{maximo@math.stanford.edu}

\maketitle

\begin{abstract}
We prove that Ricci flows with almost maximal extinction time must be nearly round, provided that they have positive isotropic curvature when crossed with $\mathbb{R}^{2}$. As an application, we show that positively curved metrics on $S^{3}$ and $RP^{3}$ with almost maximal width must be nearly round. 
\end{abstract}

\section{Introduction}

Let $g(t)$ be a Ricci flow with $g(0)=g_0$, where $g_0$ is a metric with scalar curvature $R_{g_0} \geq n(n-1)$ and such that  $(M,g_0)\times\mathbb{R}^2$ has positive isotropic curvature. By the work of Brendle-Schoen \cite{BreSch09}, $g(t)$ converges in finite time, say $T$, to a round point (more precisely, the rescaled metrics $\frac{1}{2(n-1)(T-t)}g(t)$ converge to a metric of constant curvature one as $t\rightarrow T$ and, therefore, $M$ must be diffeomorphic to a spherical space form).  Since the  scalar curvature $R(x,t)$ satisfies 
\begin{equation}\label{eq:scalar}
\partial_t R = \Delta R + 2|\textrm{Ric}|^2 \geq \Delta R + \frac{2}{n}R^2,
\end{equation}
one has, by the maximum principle, that $\inf R(\cdot,t) \geq \frac{n(n-1)}{1-2(n-1)t}$. From this, the singular time $T$ can be estimated from above, $i.e.,$ $T\leq \frac{1}{2(n-1)}$. 
Another consequence of $\eqref{eq:scalar}$ is that $T=\frac{1}{2(n-1)}$ if, and only if,  $g_0$ is an Einstein manifold with $R \equiv n(n-1)$.
So $g(t) = (1-2(n-1)t) g_0$, which implies that $g_0$ is round.
In this work, we are interested in flows where $T$ is very close to $\frac{1}{2(n-1)}$. 
Heuristically, from evolution \eqref{eq:scalar}, a metric with almost maximal extinction time should have small traceless Ricci tensor and therefore be nearly Einstein. We prove:

\begin{theorem}\label{thmmain}
Let $(M^{n},g_{0})$, $n\geq3$, be a Riemannian manifold such that $R_{g_{0}}\geq n(n-1)$ and $(M^{n},g_{0})\times\mathbb{R}^{2}$ has positive isotropic curvature. Given $\eta>0$, there exists a number $\tau>0$, which only depends on $\eta$ and on the topology of $M$, such that if the Ricci flow evolution $g(t)$ of $g_{0}$ has singular time $T>\frac{1}{2(n-1)}-\tau$, then $g_{0}$ is $\eta$-close to a metric of constant curvature one in the $\mathcal{C}^{0}$-norm. 
\end{theorem}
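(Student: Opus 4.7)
The plan is to argue by contradiction, combining a compactness argument with the rigidity case of \eqref{eq:scalar} discussed in the introduction. Suppose the conclusion fails: then there exist $\eta_{0}>0$ and a sequence of metrics $g_{i}$ on $M$ satisfying the hypotheses with singular times $T_{i}\to\tfrac{1}{2(n-1)}$, yet each $g_{i}$ is at $C^{0}$-distance at least $\eta_{0}$ from every metric of constant curvature one.

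A first reduction is to show that $\inf_{M}R_{g_{i}}\to n(n-1)$. This is immediate from \eqref{eq:scalar}: the ODE comparison applied to the spatial infimum gives $\inf R(t)\geq \inf R(0)/\bigl(1-\tfrac{2}{n}t\inf R(0)\bigr)$, so an initial infimum of $n(n-1)+\varepsilon_{i}$ would force the flow to become singular no later than $\tfrac{1}{2(n-1)+2\varepsilon_{i}/n}$, and the hypothesis $T_{i}\to \tfrac{1}{2(n-1)}$ then forces $\varepsilon_{i}\to 0$.

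The central step is to extract a subsequential smooth limit $g_{i}\to g_{\infty}$ on $M$ (modulo diffeomorphism). The fixed topology of $M$ provides a positive lower bound on volume. The PIC2 hypothesis is preserved under the Ricci flow and enforces quantitative curvature pinching; in particular, were $\sup_{M}R(g_{i})$ to blow up, a parabolic rescaling about points of maximal curvature would produce a non-trivial ancient solution with PIC2, which by the classification of such solutions must be the round shrinking sphere. Rescaling back would force the scalar curvature of $g_{i}$ to be uniformly large throughout $M$, contradicting the previous step. This rules out curvature blow-up and, together with Cheeger--Gromov compactness, yields the desired smooth limit $g_{\infty}$ on $M$.

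Continuous dependence of the Ricci flow on initial data, combined with the universal upper bound $T_{\infty}\leq \tfrac{1}{2(n-1)}$, then forces the singular time of $g_{\infty}$ to equal $\tfrac{1}{2(n-1)}$. The rigidity case of \eqref{eq:scalar} recalled in the introduction shows $g_{\infty}$ is Einstein with $R\equiv n(n-1)$ and hence round, contradicting the assumption that $g_{i}$ remains $\eta_{0}$-far from every round metric. The main obstacle in this plan is the compactness step: controlling $\sup R(g_{i})$ uniformly with only the scalar lower bound and PIC2 at one's disposal requires Brendle's pinching theory together with the classification of ancient PIC2 solutions as round shrinking spheres, and one must carefully track the dependence of these estimates on the topology of $M$ to close the argument.
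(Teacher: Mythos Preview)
Your outline has a genuine gap at the compactness step, and it is not merely a technical obstacle: the strategy you propose for bounding $\sup_{M}R(g_{i})$ cannot work, and in fact such a bound should not be expected to hold.

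First, the blowup argument you sketch does not produce an ancient solution. You are rescaling at time $t=0$, where there is no backward time available; the rescaled flows $Q_{i}\,g_{i}(Q_{i}^{-1}s)$ exist only on $[0,Q_{i}T_{i})$, so any limit is an \emph{immortal} forward flow, not an ancient one. The classification of ancient PIC2 solutions is therefore inapplicable. Even extracting a limit is problematic: Perelman-type noncollapsing at $(x_{i},0)$ requires geometric control on an earlier time slab, which you do not have. Second, and more fundamentally, the conclusion of the theorem is only $\mathcal{C}^{0}$-closeness, not $\mathcal{C}^{2}$. A $\mathcal{C}^{0}$-small perturbation of the round metric can have arbitrarily large curvature and, by the converse direction quoted after the theorem, will still have extinction time close to $\tfrac{1}{2(n-1)}$. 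So one should expect sequences $g_{i}$ satisfying the hypotheses with $\sup_{M}R(g_{i})\to\infty$; your smooth compactness scheme would, if it succeeded, prove a stronger $\mathcal{C}^{\infty}$ conclusion that is simply false.

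The paper avoids this trap by never seeking curvature control at $t=0$. Instead it works at small positive times: Brendle's Harnack inequality turns the existence of a single point with $R$ close to $\rho(t_{2})$ into uniform curvature bounds on large balls at slightly earlier times $t_{1}$ (Lemma~\ref{le:curvbd}), and a local compactness/strong-maximum-principle argument (Proposition~\ref{prop:epsilon}) then forces the sectional curvatures of $g_{k}(t_{1,k})$ to be $\varepsilon_{k}$-pinched everywhere with $t_{1,k}\to 0$. The passage back to $t=0$ uses only the Harnack bound $|{\Rm}|\leq C/t$ together with Hamilton's distance-distortion estimate, yielding Gromov--Hausdorff closeness of $g_{k}(0)$ to a round metric; nonnegative sectional curvature then upgrades this to $\mathcal{C}^{0}$. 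The essential idea you are missing is that Harnack lets one trade the unavailable curvature control at $t=0$ for control at times $t>0$, and that metric (rather than smooth) comparison suffices to close the argument.
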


Note that the conclusion of the theorem is essentially optimal.
In fact, given any $\tau > 0$, we can use the results of \cite{Sim02} or \cite{KoLa13} to find a constant $\eta > 0$ such that any $\eta$-perturbation of the round metric on $M$ has extinction time $T > \frac1{2(n-1)} - \tau$.

Theorem \ref{thmmain} was motived by the following application to 3-dimensional manifolds. Let $g$ be a metric on the 3-sphere $S^{3}$  with positive sectional curvature and scalar curvature $R\geq 6$ (in particular, $g$ satisfies the hypothesis of Theorem \ref{thmmain}). We recall the definition of the {\it width} of $g$: starting with the family $\{\overline{\Sigma}_{t}\}$ of level sets of the height function $x_{4}:S^{3}\subset\mathbb{R}^{4}\rightarrow\mathbb{R}$; $i.e.$, 
$$\{\overline{\Sigma}_{t}\}=\{x\in S^{3}\,|\,x_{4}=t\}$$
for $t\in[-1,1]$, we define $\overline{\Lambda}$ to be the collection of all families $\{\Sigma_{t}\}$ with the
property that $\Sigma_{t}=F_{t}(\overline{\Sigma}_{t})$ for some smooth one-parameter family of diffeomorphisms $F_{t}$ of $S^{3}$,
 all of which are isotopic to the identity. The {width} of $(S^{3},g)$ is the min-max invariant
$$W(g)=\displaystyle \inf_{\{\Sigma_{t}\in\overline{\Lambda}\}}\sup_{t\in[-1,1]} |\Sigma_{t}|,$$
where $|\Sigma|$ denotes the surface area of $\Sigma$. In \cite{MaNe12}, Marques-Neves proved that there exists an embedded minimal sphere $\Sigma$, of Morse index one, such that
$$W(g)=|\Sigma|\leq 4\pi,$$
and equality $W(g)=4\pi$ holds if, and only if, $g$ has constant sectional curvature one. Using Theorem \ref{thmmain}, we investigate the almost equality case:

\begin{theorem}\label{thmapp}
Let $g$ be a Riemannian metric on 3-sphere $S^{3}$ with positive sectional curvature and scalar curvature $R\geq 6$ . Given $\eta>0$, there exists $\varepsilon = \varepsilon(\eta)>0$ such that if the width of $g$ satisfies $W(g)>4\pi-\varepsilon$, then $g$ is $\eta$-close to a metric of constant curvature one in the $\mathcal{C}^{0}$-norm.
\end{theorem}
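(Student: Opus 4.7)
The plan is to reduce Theorem \ref{thmapp} to Theorem \ref{thmmain} via the intermediate statement: for every $\tau>0$ there exists $\varepsilon>0$ such that any $g$ as in Theorem \ref{thmapp} with $W(g)>4\pi-\varepsilon$ has Ricci flow extinction time $T(g)>\tfrac{1}{4}-\tau$. Given this, for any $\eta>0$ Theorem \ref{thmmain} supplies $\tau=\tau(\eta)$, and then the intermediate statement supplies the desired $\varepsilon=\varepsilon(\tau(\eta))$. The hypothesis of Theorem \ref{thmmain} is satisfied: on $S^3$, positive sectional curvature implies both positive Ricci and the PIC condition on $(M,g)\times\mathbb{R}^{2}$ required there.

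I will prove the intermediate statement by contradiction and Ricci flow compactness. Suppose it fails: there exist $\tau_0>0$ and a sequence $(g_i)$ on $S^3$ with positive sectional curvature, $R_{g_i}\geq 6$, $W(g_i)\to 4\pi$, and $T_i:=T(g_i)\leq\tfrac{1}{4}-\tau_0$. By Brendle-Schoen, the rescaled metrics $g_i(t)/(2(n-1)(T_i-t))$ converge smoothly to the round unit 3-sphere as $t\to T_i$, giving uniform regularity of the Ricci flow of each $g_i$ near extinction. Using this together with Hamilton's compactness theorem for Ricci flows, the preserved PIC condition on $\times\mathbb{R}^{2}$, and the scalar curvature lower bound $R\geq 6/(1-4t)$ along the flow, a subsequence of the flows converges in $C^{\infty}_{\text{loc}}$ to a limit Ricci flow on $S^3$. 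The initial metric $g_{\infty}$ of this limit flow will satisfy $W(g_{\infty})=4\pi$ by continuity of the width and $T(g_{\infty})\leq\tfrac{1}{4}-\tau_0<\tfrac{1}{4}$, contradicting the Marques-Neves rigidity theorem, which says that $W(g)=4\pi$ on a 3-manifold with $R\geq 6$ and positive Ricci forces $g$ to be round, hence $T(g)=\tfrac{1}{4}$.

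The main obstacle is carrying out this compactness argument rigorously. The initial metrics $g_i$ are not known to admit uniform $C^{k}$-bounds, so the subconvergence must be extracted through the Brendle-Schoen smoothing near extinction and propagated backward along a uniformly-controlled portion of the flow. The delicate point is to choose the rescalings and intermediate times so that both the width saturation $W(g_i)\to 4\pi$ and the extinction-time gap $\tfrac{1}{4}-T_i\geq\tau_0$ are faithfully preserved in the limit, so that the Marques-Neves rigidity theorem can deliver the contradiction. Note that a direct analytic approach seems unlikely to succeed: the Colding-Minicozzi-type evolution $W'\leq -4\pi-\tfrac{R_{\min}(t)}{2}W$ on the width yields the inequality $W(g_0)\geq 4\pi\bigl(1-(1-4T)^{1/4}\bigr)$, which forces $W(g_0)\to 4\pi$ whenever $T\to\tfrac{1}{4}$ but not the converse implication we need.
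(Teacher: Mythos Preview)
Your reduction to Theorem \ref{thmmain} through the intermediate statement ``$W(g)>4\pi-\varepsilon \Rightarrow T(g)>\tfrac14-\tau$'' is exactly the paper's strategy. Where you diverge is in how to prove that intermediate statement, and here there are two issues.

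First, your compactness argument is not a proof as it stands, and you acknowledge this. Extracting a smooth limit Ricci flow \emph{at} $t=0$ (not merely on some interior interval) requires uniform curvature and injectivity-radius control on the initial metrics $g_i$, which you do not have; the Brendle--Schoen convergence near extinction only gives control at times bounded away from $0$. Even granting a limit metric $g_\infty$, you would still need to justify $W(g_\infty)=\lim W(g_i)$: the width is an inf--sup quantity and its continuity under $C^\infty$ convergence of metrics is not automatic. These are not minor technicalities; they are the entire content of the argument.

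Second, and more to the point, the direct analytic approach you dismiss is precisely what the paper uses --- you are simply looking at the wrong inequality. You cite the Colding--Minicozzi-type upper bound $W'\leq -4\pi-\tfrac{R_{\min}}{2}W$, which indeed only yields $T\to\tfrac14\Rightarrow W\to4\pi$. But Marques--Neves also prove the complementary \emph{lower} bound
\[
W(g(t))\ \geq\ W(g(0))-16\pi\,t,
\]
obtained from the Hersch-type estimate $\int_\Sigma(\mathrm{Ric}(\nu,\nu)+|A|^2)\leq 8\pi$ for the index-one min-max sphere, which bounds $\int_\Sigma \mathrm{tr}_\Sigma\mathrm{Ric}\leq 16\pi$ and hence $\frac{d}{dt}W\geq -16\pi$. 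Combined with $\lim_{t\nearrow T}W(g(t))=0$, this gives $T\geq W(g(0))/16\pi$, so $W(g)>4\pi-16\pi\tau$ forces $T>\tfrac14-\tau$. That is the paper's proof in full: two lines, no compactness. Your proposed route would, if completed, give an alternative soft proof via the rigidity case, but the missing lower bound on $W'$ is both the gap in your dismissal of the direct approach and the key input you need.
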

 
Theorem \ref{thmmain} similarly implies a rigidity statement for positively curved metrics $g$ on the real projective 3-space $RP^{3}$. In this case, following Bray-Brendle-Eichmair-Neves \cite{BBEN10}, we can consider the least area embedding of $RP^{2}$ inside $(RP^{3},g)$. We denote its area by $\mathcal{A}(g)$. In \cite{BBEN10}, the authors prove the following inequality:
$$\mathcal{A}(g)\inf R_{g} \leq 12\pi,$$
and showed that equality only happens for the round metric of ${RP}^{3}$. Using Theorem \ref{thmmain}, we show the almost equality case:

\begin{theorem}\label{thmapp2}
Let $g$ be a Riemannian metric on real projective 3-space $RP^{3}$ with positive sectional curvature and scalar curvature $R\geq 6$. Given $\eta>0$, there exists $\varepsilon = \varepsilon(\eta)>0$ such that if the least-area embedding of $RP^{2}$ satisfies $\mathcal{A}(g)>2\pi-\varepsilon$, then  then $g$ is $\eta$-close to a metric of constant curvature one in the $\mathcal{C}^{0}$-norm.
\end{theorem}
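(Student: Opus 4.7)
The plan is to reduce Theorem \ref{thmapp2} to Theorem \ref{thmmain}, in the same spirit as the proof of Theorem \ref{thmapp}. Concretely, I would establish that the hypothesis $\mathcal{A}(g_0) > 2\pi - \varepsilon$ forces the Ricci flow $g(t)$ of $g_0$ to have singular time $T > \frac{1}{4} - \tau(\varepsilon)$, with $\tau(\varepsilon) \to 0$ as $\varepsilon \to 0$. Invoking Theorem \ref{thmmain} with $n = 3$ (where the hypothesis that $M \times \mathbb{R}^{2}$ has PIC is automatic for positively curved metrics on $RP^{3}$) then delivers the $\mathcal{C}^{0}$-closeness to a round metric.

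To execute the reduction, first note that the hypotheses of the Bray-Brendle-Eichmair-Neves inequality persist along the three-dimensional Ricci flow: positive sectional curvature is preserved by Hamilton's theorem, and \eqref{eq:scalar} combined with the maximum principle gives $R(g(t)) \geq \frac{6}{1-4t}$. Applying BBEN at each $t\in[0,T)$ yields
\[
\mathcal{A}(g(t)) \;\leq\; \frac{12\pi}{\inf R(g(t))} \;\leq\; 2\pi(1-4t).
\]
Then, using the stable area-minimizing $\Sigma_{t}\cong RP^{2}$ as a competitor for nearby times, the first-variation formula combined with the Gauss equation and Gauss-Bonnet on $\Sigma_{t}$ (where $\int_{\Sigma_{t}}K_{\Sigma_{t}}\,d\sigma=2\pi$) gives the differential inequality
\[
\frac{d^{+}}{dt}\mathcal{A}(g(t)) \;\leq\; -\tfrac{1}{2}\textstyle\int_{\Sigma_{t}}R_{g(t)}\,d\sigma \;-\; 2\pi \;-\; \tfrac{1}{2}\textstyle\int_{\Sigma_{t}}|A_{\Sigma_{t}}|^{2}\,d\sigma.
\]

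The main obstacle is to convert the displayed differential inequality, together with the BBEN bound $\mathcal{A}(g(t)) \leq 2\pi(1-4t)$, into a quantitative lower bound on the extinction time $T$. A straightforward integration of the ODE produces only an upper bound of the form $T \leq T_{*}(\mathcal{A}(g_{0}))$ with $T_{*}(2\pi)=\frac{1}{4}$; the reverse estimate requires additional input to prevent $\mathcal{A}(g(t))$ from crashing to zero well before the round profile would vanish. That input is supplied by the Brendle-Schoen convergence theorem, which says that near the singular time the rescaled metrics converge to a round $RP^{3}$ of scalar curvature six, forcing the asymptotic profile $\mathcal{A}(g(t)) \sim 8\pi(T-t)$ as $t\to T$. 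Tracking the nonnegative defect $\Phi(t) := 2\pi(1-4t) - \mathcal{A}(g(t))$ along the flow and combining this asymptotic behavior at $t \to T$ with the initial smallness $\Phi(0) < \varepsilon$, one shows that $1-4T$ is controlled quantitatively by $\varepsilon$. This argument is the exact analogue of the width case in Theorem \ref{thmapp}; the only modifications are cosmetic, with the Gauss-Bonnet contribution $2\pi$ (for $RP^{2}$) and the BBEN constant $12\pi$ replacing the $4\pi$ and $24\pi$ that appear for $S^{2}$ and Marques-Neves in the $S^{3}$ setting, both differing by the factor of two reflecting the $(RP^{2}\subset RP^{3})\to(S^{2}\subset S^{3})$ double cover.
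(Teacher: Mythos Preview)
Your overall plan---show that $\mathcal{A}(g_0)>2\pi-\varepsilon$ forces $T$ close to $\tfrac14$, then invoke Theorem~\ref{thmmain}---is exactly the paper's strategy. But your execution of the first step has a real gap, and the fix is much simpler than what you propose.

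The differential inequality you write down is an \emph{upper} bound on $\tfrac{d^{+}}{dt}\mathcal{A}(g(t))$; as you correctly observe, integrating it only bounds $T$ from above. Your proposed rescue via the defect $\Phi(t)=2\pi(1-4t)-\mathcal{A}(g(t))$ and the Brendle--Schoen asymptotic does not work: feeding your inequality together with $R\ge\rho(t)$ into the definition of $\Phi$ yields only $\Phi'(t)\ge -\tfrac{3}{1-4t}\,\Phi(t)$, hence a \emph{lower} bound $\Phi(t)\ge\Phi(0)(1-4t)^{3/4}$, which says nothing about $\Phi(T^-)=2\pi(1-4T)$ being small. None of the ingredients you list rules out a flow with $\mathcal{A}(g_0)$ near $2\pi$ but $T$ bounded away from $\tfrac14$.

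What you are missing is the \emph{opposite} monotonicity: a linear lower bound $\mathcal{A}(g(t))\ge\mathcal{A}(g(0))-8\pi t$, proved in \cite{BBEN10} and quoted verbatim in the paper. (You call $\Sigma_t$ ``stable'' but never use the second variation; that is precisely the input that produces an upper bound on $\int_{\Sigma_t}\text{tr}_{\Sigma_t}\Rc$ and hence a lower bound on $\tfrac{d}{dt}\mathcal{A}$.) With this in hand the reduction is one line: since $\lim_{t\nearrow T}\mathcal{A}(g(t))=0$, one gets $T\ge\mathcal{A}(g(0))/8\pi>\tfrac14-\varepsilon/8\pi$, and Theorem~\ref{thmmain} applies. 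No defect tracking or singular-time asymptotics are needed. The paper's proof of Theorem~\ref{thmapp} is the same one-line deduction from the Marques--Neves monotonicity $W(g(t))\ge W(g(0))-16\pi t$; it does not use the argument you sketch for it either.
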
 

The invariants $W(g)$ and $\mathcal{A}(g)$ can be thought of as a 2-dimensional analogs of the diameter of $g$. 
In this sense, it is interesting to compare theorems \ref{thmapp} and \ref{thmapp2} with previously known rigidity results for positively curved metrics with almost maximal diameter. 
For example, Colding \cite{Col96} proved that an $n$-dimensional Riemannian manifold with Ricci curvature at least $n-1$ and volume sufficiently close to the volume of the round $n$-sphere $\Sph^{n}$ must be close the round sphere $\Sph^{n}$ in the Gromov-Hausdorff distance.    
Moreover, Cheeger and Colding \cite{ChCo96} showed that an $n$-dimensional Riemannian manifold with Ricci curvature at least $n-1$ and almost maximal diameter must be close in the Gromov-Hausdorff sense to a metric suspension $(0,\pi) \times_{\sin r} X$ of some metric space $X$. 
However, there are smooth metrics on $S^{3}$, obtained by smoothings of the suspension over a small 2-sphere, that have sectional curvature bounded bellow by one, almost maximal diameter, and are {\it not} close to some round metric, even in the Gromov-Hausdorff sense. 
Hence, Theorem \ref{thmapp} and such examples indicate that the width is more rigid than the diameter.

\section{Proof of Theorem \ref{thmmain}}\label{sec:profmain}

In what follows, we consider a fixed Riemannian manifold $M^n$ and Ricci flows $g(t)$ on $M^n$ such that $R_{g(0)}\geq n(n-1)$ and $(M^{n},g(0))\times\mathbb{R}^{2}$ has positive isotropic curvature. 
We assume such flows exist at least for some time, say $\bar T_n$, where $\bar T_n\in\pl0,\frac{1}{2(n-1)}\pr$ is some number we fix for the remainder of the paper.
From now on, we will refer to a Ricci flow under the such hypothesis by {\it a Ricci flow as above}, and we denote the scalar curvature of the evolution of a constant curvature one metric by $\rho(t)=\frac{n(n-1)}{1-2(n-1)t}$. 

\begin{proposition}\label{prop:max}
Given any real number $0<\delta <1$, there exists $\tau =\tau (\delta)>0$  such that: if $g(t)$ is a Ricci flow as above with extinction time $T \geq \frac{1}{2(n-1)}-\tau$, then, for any time $0\leq t \leq \bar T_n$, the minimum of the scalar curvature at time $t$ over $M$ is less than $\rho(t)+\delta$.
\end{proposition}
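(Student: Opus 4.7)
The plan is to argue by contradiction, applying the same scalar-curvature ODE comparison that was used in the introduction to derive $T \leq \frac{1}{2(n-1)}$, but initialized at an arbitrary time $t_0 \in [0,\bar T_n]$ rather than at $t=0$. In this way the proposition becomes a quantitative/uniform version of the equality case of that bound.

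Suppose, for contradiction, that at some $t_0 \in [0,\bar T_n]$ one had $\min_M R(\cdot,t_0) \geq \rho(t_0)+\delta$. By \eqref{eq:scalar} and the parabolic maximum principle, $\min_M R(\cdot,t) \geq \varphi(t)$ on $[t_0,T)$, where $\varphi$ is the solution of the ODE $\varphi'=\frac{2}{n}\varphi^2$ with initial value $\varphi(t_0)=\rho(t_0)+\delta$. Since $\varphi$ blows up at time
$$ T_\varphi \;=\; t_0 + \frac{n}{2(\rho(t_0)+\delta)}, $$
one must have $T \leq T_\varphi$.

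Next, the same ODE computation applied to $\rho$ itself gives the identity $\frac{1}{2(n-1)} = t_0 + \frac{n}{2\rho(t_0)}$ (this is just the content of the bound $T \leq \frac{1}{2(n-1)}$ recalled in the introduction). Subtracting the two expressions yields
$$ \frac{1}{2(n-1)} - T \;\geq\; \frac{n}{2\rho(t_0)} - \frac{n}{2(\rho(t_0)+\delta)} \;=\; \frac{n\delta}{2\rho(t_0)(\rho(t_0)+\delta)}. $$
Because $t_0 \leq \bar T_n < \frac{1}{2(n-1)}$, the quantity $\rho(t_0)$ is bounded above by the constant $C := \rho(\bar T_n)$, which depends only on $n$ and $\bar T_n$. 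Using also $\delta<1$, the right-hand side is at least $\frac{n\delta}{2C(C+1)}$. Choosing any $\tau = \tau(\delta)$ strictly smaller than $\frac{n\delta}{2C(C+1)}$ (e.g.\ $\tau(\delta) = \frac{n\delta}{4C(C+1)}$) then contradicts the hypothesis $T \geq \frac{1}{2(n-1)} - \tau$.

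I do not anticipate any serious analytic obstacle in this step: the argument is a routine quantitative strengthening of the standard scalar-curvature maximum principle, and the only subtlety is the uniformity in $t_0$, which is automatic once one notes that $\rho$ is bounded on $[0,\bar T_n]$. The genuine difficulty of the paper should lie downstream, in upgrading this pointwise-in-time scalar control to the global $\mathcal{C}^0$-closeness asserted in Theorem~\ref{thmmain}.
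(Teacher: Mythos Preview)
Your proof is correct and follows essentially the same approach as the paper's own argument: both argue by contradiction, apply the maximum principle to the ODE $\varphi' = \tfrac{2}{n}\varphi^2$ starting from a time $t_0 \in [0,\bar T_n]$ at which $\min_M R \geq \rho(t_0)+\delta$, and deduce an upper bound on $T$ strictly below $\tfrac{1}{2(n-1)}$ that is uniform in $t_0$. The only difference is cosmetic: you make the quantitative gap $\tfrac{1}{2(n-1)} - T \geq \tfrac{n\delta}{2C(C+1)}$ and the resulting choice of $\tau(\delta)$ explicit, whereas the paper simply observes that the strict inequality $T < \tfrac{1}{2(n-1)}$ is uniform over $\bar t \in [0,\bar T_n]$ and leaves it at that.
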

\begin{proof}
Consider any such Ricci flow. By the maximum principle applied to evolution \eqref{eq:scalar} we have that $R(\cdot,t)\geq \rho(t)$. Moreover, if for a certain time $\overline{t}\in[0,\bar T_n)$ we have that 
$$R(\cdot,\overline{t})\geq \rho(\overline{t})+\delta,$$
then, again by the maximum principle and \eqref{eq:scalar} we must have for all $t \in [ \overline{t}, T)$ that:
$$R(\cdot,t)\geq \frac{\rho(\overline{t})+\delta}{1-2n^{-1}(\rho(\overline{t})+\delta)(t-\overline{t})}.$$
In particular, $T$ would satisfy $1-2n^{-1}(\rho(\overline{t})+\delta)(T-\overline{t})\geq0$, that is, 
\begin{equation}\label{eq:ext}
T\leq \frac{n}{2(\rho(\overline{t})+\delta)}+\overline{t} < \frac{n}{2 \rho ( \overline{t} )} + \overline{t} =\frac{1}{2(n-1)}.
\end{equation}
Since $\overline{t}\in[0,\bar T_n]$, inequality \eqref{eq:ext} gives us a contradiction if $T$ is sufficiently close to $\frac{1}{2(n-1)}$, so we are done.
\end{proof}

Note that Proposition \ref{prop:max} was a consequence of a (global) maximum principle applied to the evolution equation of the scalar curvature.
To obtain more information about such flows, we will need a localized version of the maximum principle, which comes in the form of Hamilton's Harnack inequality. 
More specifically, we will show that the set of points at which the scalar curvature attains a value close to $\rho(t)$ does not change too much in time.
We will prove: 

\begin{proposition}\label{prop:scalar} There exist a constant $A =A(n)< \infty$ such that for any time $t_2\in (0,\bar T_n]$, we can find a time $t_2/2 < t_1=t_{1}(t_{2},n) < t_2$ with the property that: given $ \theta >0 $ small, there exists $\delta=\delta(\theta, t_2, n)\in(0,1)$ such that for any Ricci flow as above, if one has a point $\overline{x}\in M$ satisfying $R(\bar{x},t_2)<\rho(t_2)+\delta$, then there must exist a point $y \in B\pl\overline{x}, t_1, A \sqrt{t_2 - t_1}\pr$ satisfying $R( y, t_1 ) < \rho (t_1) + \theta$.
\end{proposition}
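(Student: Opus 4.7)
The plan is to argue by contraposition, using parabolic comparison for the quantity $u := R - \rho$. Since $\rho$ solves $\rho' = \frac{2}{n}\rho^2$ (the equality case of \eqref{eq:scalar}) and $|\mathrm{Ric}|^2 \geq R^2/n$ pointwise, one computes
$$\partial_t u \;\geq\; \Delta u + \tfrac{2}{n}(R+\rho)\,u \;\geq\; \Delta u,$$
and $u \geq 0$ by the maximum principle argument of Proposition \ref{prop:max}. Hence $u$ is a nonnegative supersolution of the heat equation along the Ricci flow.

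Next, suppose toward contradiction that $u(y, t_1) \geq \theta$ for every $y \in B(\bar x, t_1, A\sqrt{t_2 - t_1})$. Let $v$ be the solution of the heat equation along $g(t)$ with $v(\cdot, t_1) = u(\cdot, t_1)$; parabolic comparison then gives $u(\bar x, t_2) \geq v(\bar x, t_2)$ and
$$v(\bar x, t_2) \;\geq\; \theta \int_{B(\bar x, t_1, A\sqrt{t_2-t_1})} K(\bar x, t_2; y, t_1) \, \dvol_{g(t_1)}(y),$$
where $K$ is the heat kernel of the evolving metric. The kernel admits Gaussian-type lower bounds at scale $\sqrt{t_2 - t_1}$ once one has uniform bounds on curvature in a parabolic neighborhood of $(\bar x, t_2)$. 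Such local bounds follow from Hamilton's trace Harnack inequality applied backwards from $(\bar x, t_2)$ — valid in the present setting through Brendle's extension of Hamilton's Harnack to the PIC2 assumption — since $R(\bar x, t_2) < \rho(t_2) + \delta \leq \rho(t_2) + 1$. Choosing the universal constant $A = A(n)$ large enough that the Gaussian weight $e^{-A^2/2}$ still captures a definite fraction of the total heat-kernel mass, and $t_1 = t_1(t_2, n) \in (t_2/2, t_2)$ close enough to $t_2$ that metric distortion between times $t_1$ and $t_2$ is controlled, yields
$$u(\bar x, t_2) \;\geq\; \kappa(n, t_2)\, \theta$$
for some explicit $\kappa > 0$. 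Setting $\delta(\theta, t_2, n) := \tfrac{1}{2} \kappa(n, t_2)\, \theta$ then contradicts the hypothesis $u(\bar x, t_2) < \delta$, completing the contrapositive.

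The main obstacle is the interlocking choice of the constants: $A$ must be large enough for the heat-kernel mass inside $B$ to be a definite fraction of $1$, but a larger $A$ means a bigger ball on which the Harnack-derived curvature bound is weaker and the heat-kernel lower bound therefore degrades. The requirement $t_1 > t_2/2$ reflects that $\sqrt{t_2 - t_1}$ should sit well below the geometric scale set by the background Ricci flow, so that the Gaussian heat-kernel estimate reduces essentially to its Euclidean form with controlled error terms. Making this balancing act quantitative in the PIC2 setting — extracting a genuine Gaussian lower bound on $K$ from a Harnack-derived curvature bound in a parabolic neighborhood of $(\bar x, t_2)$, uniformly over all Ricci flows as above — is the technical heart of the proof.
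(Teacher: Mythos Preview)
Your outline is in the right direction --- both arguments propagate the lower bound $R-\rho \geq \theta$ on the ball at time $t_1$ forward to $(\bar x, t_2)$ via parabolic comparison --- but the paper sidesteps precisely the step you flag as ``the technical heart.'' Rather than invoking Gaussian heat-kernel lower bounds, the paper constructs by hand (Lemma~\ref{lemma:sub}) a compactly supported subsolution $v$ of the heat equation with $\operatorname{supp} v(\cdot,t)\subset B(\bar x, t, A\sqrt{t_2-t})$ and $v(\bar x, t) = \sqrt{t_2-t}$; the Ansatz is $v(x,t)=\sqrt{t_2-t}\,\varphi\big(\alpha\, d_{g(t)}^2(\bar x,x)/(t_2-t)\big)$ for a fixed cutoff $\varphi$, and the verification uses only the Harnack-derived local curvature bound, $\partial_t d \geq -Kd$, and Laplacian comparison $\Delta d \leq (n-1)/d$. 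Then $R-\theta v$ is a supersolution of the scalar-curvature ODE, giving $R(\bar x,t) > \rho(t) + \theta\sqrt{t_2-t}$ for $t\in[t_1,t_2)$, and this is played directly against the Harnack upper bound $R(\bar x,t) \leq \tfrac{t_2}{t}(\rho(t_2)+\delta)$ at the single point $\bar x$ --- no spatial integration of a kernel is needed. Your heat-kernel route would succeed once a uniform Gaussian lower bound on $K$ is established from local curvature control, but producing that bound is essentially equivalent in difficulty to the barrier construction; the paper's explicit barrier is the more elementary and self-contained way to close the argument.
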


\noindent{\bf Notation.} Given a Ricci flow $g(t)$,  a point in $x\in M$, and a number $r>0$, we write $B(x,t,r)$ to denote the open ball in $M$ centered in $x$ and of radius $r$ with respect to $g(t)$. If working with a sequence of flows $g_k(t)$, we write $B_k(x,t,r)$ to emphasize the dependence on $k$.\\

Before proving Proposition \ref{prop:scalar}, we would like to comment on some of the ingredients that go into its proof. First, recall Hamilton's Harnack inequality \cite{Ham93}, which was later generalized by Brendle \cite{Brendle:harnack} for Ricci flows with our curvature assumptions. Given two points $x,y\in M$ and times $0<t_1<t_2<T$, their inequality implies that the scalar curvature must satisfy:
\begin{equation}\label{eq:harnack}
R(y,t_1)\leq \frac{t_2}{t_1}\exp\pl \frac{d_{g(t_1)}^2(x,y)}{2(t_2-t_1)}\pr  R(x,t_2),
\end{equation}
where $d_{g(t_1)}(x,y)$ is the distance between the points $x,y$ measured with respect to the metric $g(t_1)$.
The Harnack inequality \eqref{eq:harnack} will be specially useful in our discussion since we can use it to propagate scalar curvature bounds backwards in time.
Moreover, since a Ricci flow as above has nonnegative sectional curvature everywhere, once one has a bound on the scalar curvature of a point, one immediately obtains bounds on all the sectional curvatures of that point, $i.e.$,
\begin{equation}\label{eq:curvaturebound}
 |\Rm|_{g(t)}(x,t) \leq R(x,t)
\end{equation} 
The other ingredient that goes into the proof of Proposition \ref{prop:scalar} is the existence of a suitable subsolution to the heat equation supported on small balls with good asymptotic behavior:

\begin{lemma}\label{lemma:sub}
There exists a constant  $A=A(n)>0$ such that: given any time $t_2\in (0,\bar T_n ]$, one can find an earlier time $t_2/ 2 <t_1=t_1(t_2,n)<t_2$ with the property that for any Ricci flow as above where one has a point $\overline{x}\in M$ satisfying $R(\bar{x},t_2)<\rho(t_2)+1$, there exists a smooth, bounded, nonnegative function $u(x,t)$ on $M \times [t_1,t_2)$ satisfying:

\begin{itemize}
\item[(i)] $\partial_t u \leq \Delta_{g(t)} u $ in the barrier sense, $i.e.,$ $u(x,t)$ is a subsolution of the heat equation;
\item[(ii)] for all $t\in[t_1,t_2)$, the function $u(x,t)$ is supported in the ball \linebreak[1] $B \big( \overline{x}, \linebreak[1] t, \linebreak[1] A\sqrt{t_2-t} \big)$
\item[(iii)] $0\leq u(\cdot,t)\leq 1$ and  $u(\bar{x},t)= \sqrt{t_{2}-t}$.
 \end{itemize}
 \end{lemma}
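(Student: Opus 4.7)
The plan is to look for $u$ of the form
\[
u(x,t) = \sqrt{t_2-t}\,\phi\!\pl\tfrac{r(x,t)^2}{\alpha(t_2-t)}\pr,\qquad r(x,t):=d_{g(t)}(\overline{x},x),
\]
where $\phi\colon[0,\infty)\to[0,1]$ is a smooth, non-increasing cutoff with $\phi(0)=1$ and $\phi\equiv 0$ on $[1,\infty)$, and $\alpha=\alpha(n)$ is to be chosen. This ansatz automatically yields (ii) with $A=\sqrt{\alpha}$, while (iii) follows from $\sqrt{t_2-t}\le\sqrt{\bar T_n}<1$. So only the subsolution property (i) requires real work. The underlying heuristic is that the time factor $\sqrt{t_2-t}$ on its own is a strict subsolution of the heat equation on any Ricci flow, and the spatial cutoff $\phi$ should introduce only controllable errors.

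First I would apply Brendle's Harnack inequality \eqref{eq:harnack} at the base-point $(\overline{x},t_2)$ to upgrade the hypothesis $R(\overline{x},t_2)<\rho(t_2)+1$ into a uniform curvature bound on the parabolic cone
\[
\mathcal{P}:=\{(y,t):t_2/2\le t<t_2,\ d_{g(t)}(\overline{x},y)\le A\sqrt{t_2-t}\}.
\]
Indeed, \eqref{eq:harnack} gives $R(y,t)\le\tfrac{t_2}{t}\exp(A^2/2)(\rho(t_2)+1)\le K_0$ on $\mathcal{P}$ for some $K_0=K_0(n,t_2)$, and \eqref{eq:curvaturebound} then upgrades this to $|\Rm|_{g(t)}\le K_0$ throughout $\mathcal{P}$.

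Next, this full curvature bound feeds into the standard Hamilton-Perelman distance-distortion estimate and Laplacian comparison: in the barrier sense on $\mathcal{P}$,
\[
\partial_t r\ge -C_n\sqrt{K_0},\qquad \Delta r\le \tfrac{n-1}{r}+C_n\sqrt{K_0}\,r.
\]
Because the ansatz only involves $r^2$, the singular $(n-1)/r$ term cancels a factor of $r$ from the chain rule, yielding $\Delta(r^2)\le 2n+C_n K_0 r^2$ and $\partial_t(r^2)\ge -C_n\sqrt{K_0}\,r$. Differentiating $u$ and using $r^2/(t_2-t)\le\alpha$ throughout the support, $\partial_t u-\Delta u$ decomposes as a "Euclidean" main term of order $-c(\alpha,\phi)/\sqrt{t_2-t}$ plus curvature errors of order $\sqrt{K_0}$ and $K_0\sqrt{t_2-t}$. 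I would first fix $\alpha=\alpha(n)$ and $\phi=\phi(n)$ so that the flat-space main term is strictly negative — this is the familiar parabolic cutoff computation and works for any sufficiently large $\alpha$ — and then choose $t_1=t_1(t_2,n)\in(t_2/2,t_2)$ close enough to $t_2$ that, on $[t_1,t_2)$, the $(t_2-t)^{-1/2}$ main term dominates the bounded error terms.

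The main obstacle is the bookkeeping in this last step: ensuring that every error term produced by the curvature correction to $\partial_t r$ and $\Delta r$ grows strictly slower than $(t_2-t)^{-1/2}$ as $t\to t_2^-$, so that absorption amounts merely to shrinking $t_2-t_1$. The nonsmoothness of $r(\cdot,t)$ along the cut locus is handled uniformly by interpreting (i) in the barrier sense, using smooth upper barriers for $r^2$ at smooth points and approximation elsewhere; if a globally smooth $u$ is desired, one smooths the cutoff $\phi$ near $s=1$ after the estimate is established.
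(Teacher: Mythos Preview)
Your plan is the paper's: the same ansatz $u=\sqrt{t_2-t}\,\phi(r^2/\alpha(t_2-t))$, the same use of Harnack to get a curvature bound on the parabolic cone, and the same two-step strategy (fix the cutoff so the model inequality holds, then take $t_1$ close to $t_2$ to absorb the curvature errors). Two places where the paper is sharper than your sketch:

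First, you invoke Hamilton--Perelman distance distortion $\partial_t r\ge -C_n\sqrt{K_0}$ and a two-sided Laplacian comparison $\Delta r\le(n-1)/r+C_n\sqrt{K_0}\,r$. Since $\text{Ric}>0$ here, the paper instead uses the elementary bounds $\partial_t r\ge -K_0 r$ (integrate $\text{Ric}$ along the minimizing geodesic, which stays inside the curvature-controlled ball) and $\Delta r\le(n-1)/r$. Your Hamilton--Perelman form is also slightly delicate as written: it needs curvature control on balls of scale $\sim K_0^{-1/2}$ around \emph{both} endpoints, and once $\tau$ is small those balls need not fit inside $B(\overline{x},t,A\sqrt{\tau})$.

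Second, your absorption step claims a main term of order $-c/\sqrt{\tau}$ with a uniform $c>0$. That is not literally true: the bracket $-\tfrac12\phi+s\phi'-\tfrac{4s}{\alpha}\phi''-\tfrac{2n}{\alpha}\phi'$ vanishes at the edge of the support, so the comparison with the error terms (all carrying a factor $|\phi'|$) has to be made pointwise in $s$, not via a uniform lower bound. The paper does this by imposing structure on the cutoff ($\varphi\equiv 1$ near $0$, $\varphi''\ge 0$ near the edge) and verifying the resulting one-variable inequality region by region; a generic non-increasing $\phi$ would require the same care.
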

 
\begin{proof}
See Appendix.
\end{proof}

We are now ready to prove Proposition \ref{prop:scalar}:

\begin{proof}[Proof of Proposition \ref{prop:scalar}] Let $A$ and $t_1$ as in Lemma \ref{lemma:sub}. We argue by contradiction: Given $\theta>0$ small, suppose that for any $1>\delta>0$ it is possible to find a Ricci flow as above such that  $\rho(t_{2})\leq R(\overline{x},t_{2})<\rho(t_{2})+\delta$, but $R( y, t_1 ) > \rho (t_1) + \theta$ \text{\rm for all} $y \in B\pl\overline{x}, t_1, A \sqrt{t_2 - t_1}\pr$. Let $u(x,t)$ be the subsolution constructed in Lemma \ref{lemma:sub}. In such case, we get: 
$$R(\cdot,t_{1})>\rho(t_{1})+\theta u(\cdot,t_{1}).$$
Since
\begin{align}
 \partial_t \big( R (\cdot, t) - \theta u (\cdot, t) \big) &\geq \Delta \big( R(\cdot, t) - \theta u(\cdot, t) \big) + \frac{2}{n} R^2 \nonumber \\
 &\geq \Delta \big( R(\cdot, t) - \theta u(\cdot, t) \big) + \frac2{n} \big( R(\cdot, t) - \theta u (\cdot, t) \big)^2, \nonumber
\end{align}
we get by the maximum principle that also at latter times $t\in[t_{1},t_{2})$:
$$
R(\cdot,t)>\rho(t)+\theta u(\cdot,t).
$$
In particular, we get that at $\overline{x}$
$$ R(\overline{x}, t) > \rho(t) + \theta \sqrt{t_2 - t}. $$
Next, at the point $\bar{x}$, we have by Harnack's inequality (\ref{eq:harnack})
$$R(\bar{x},t)\leq \frac{t_2}{t} R(\bar{x},t_{2})<\frac{t_{2}}{t}(\rho(t_{2})+\delta),$$
so, for all $t \in [t_1, t_2)$
$$ \rho(t) + \theta \sqrt{t_2 - t} < \frac{t_2}t ( \rho (t_2) + \delta ). $$
Note that since $t_1 > t_2 /2$ we have $| \rho(t) - \frac{t_2}{t} \rho(t_2) | \leq C(n) (t_2 - t)$ for some positive dimensional constant $C(n)$, and hence we have for all $t \in [t_1, t_2)$:
$$ \theta \sqrt{t_2 - t} < C(n) (t_2 - t) + 2 \delta. $$
Choose now $t_{12} \in (t_1, t_2)$, $t_{12} = t_{12} (\theta, t_2, n )$ such that $C(n) (t_2 - t_{12}) < \frac12 \theta \sqrt{t_2 - t}$.
Then
$$ \tfrac12 \theta \sqrt{ t_2 - t_{12} } < 2 \delta. $$
So we obtain a contradiction if we set
\[ \delta = \delta (\theta, n, t_2 ) = \tfrac14 \theta \sqrt{ t_2 - t_{12}(\theta, t_2 , n) } .  \qedhere \]
 \end{proof}
 
We next prove that we have a uniform curvature bound on parabolic neighborhoods around points in $B (\bar{x}, t_1, A \sqrt{t_2 - t_1})$.
This is a preparatory step to using compactness.

\begin{lemma}\label{le:curvbd}
Let $g(t)$ be a Ricci flow as above with $\overline{x}\in M$ satisfying $R(\bar{x},t_2)<\rho(t_2)+1$, and $t_1,t_2$ and $A$ as in Proposition \ref{prop:scalar}. 
Given a positive number $D < \infty$, there exists a constant $C = C(D, t_1, t_2, n) < \infty$ such for any point $y\in B\pl\overline{x},t_1,A\sqrt{t_2-t_1}\pr$, we have a curvature bound 
$$|\Rm|_{g(t)}(x,t)<C $$
for all  $(x,t)\in B\pl y,t_1,D\pr\times[t_1/2,t_1].$
 \end{lemma}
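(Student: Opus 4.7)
The plan is to apply Hamilton's Harnack inequality \eqref{eq:harnack} twice and invoke the pointwise bound $|\Rm|\leq R$ from \eqref{eq:curvaturebound}. The parabolic neighborhood $B(y,t_1,D)\times[t_1/2,t_1]$ lies at times no later than $t_1$, so the two natural moves are: first use Harnack to control $R$ at time $t_1$ on a sufficiently large spatial ball (propagating the known bound at $\bar x$ at time $t_2$ back to $t_1$), then propagate that control backward in time to $[t_1/2,t_1]$.

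For the first step, I would fix an arbitrary $z$ with $d_{g(t_1)}(z,\bar x)<D+A\sqrt{t_2-t_1}$ and apply \eqref{eq:harnack} at $(z,t_1)$ and $(\bar x,t_2)$. Since $R(\bar x,t_2)<\rho(t_2)+1$, this yields
\[
R(z,t_1)\leq \frac{t_2}{t_1}\exp\!\left(\frac{(D+A\sqrt{t_2-t_1})^2}{2(t_2-t_1)}\right)(\rho(t_2)+1)=:K_1,
\]
and $K_1$ depends only on $D,t_1,t_2,n$ because $A=A(n)$ and $\rho(t_2)$ depends only on $n,t_2$. The triangle inequality then shows $B(y,t_1,D)\subset B(\bar x,t_1,D+A\sqrt{t_2-t_1})$ for every $y\in B(\bar x,t_1,A\sqrt{t_2-t_1})$, so every $x$ under consideration already satisfies $R(x,t_1)\leq K_1$.

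The second step is the slick observation that applying \eqref{eq:harnack} at the \emph{same} spatial point makes the exponential distance term vanish. For $x\in B(y,t_1,D)$ and $t\in[t_1/2,t_1]$, taking $x$ at the earlier time $t$ and at the later time $t_1$ gives
\[
R(x,t)\leq \frac{t_1}{t}\,R(x,t_1)\leq 2K_1,
\]
and \eqref{eq:curvaturebound} upgrades this to $|\Rm|_{g(t)}(x,t)\leq R(x,t)\leq 2K_1$. Setting $C:=2K_1$ completes the argument.

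I had expected the main obstacle to be controlling $d_{g(t)}(x,\bar x)$ or $d_{g(t)}(x,y)$ for $t<t_1$, since in a Ricci flow with nonnegative Ricci curvature distances only grow as one moves to earlier times, which is the wrong direction for the exponential factor in \eqref{eq:harnack}. The same-point Harnack trick in the second step sidesteps this difficulty entirely, which is what keeps the proof short.
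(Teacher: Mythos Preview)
Your proof is correct and follows exactly the same approach as the paper: first apply Harnack's inequality between $(\cdot,t_1)$ and $(\bar{x},t_2)$ on the enlarged ball $B(\bar{x},t_1,A\sqrt{t_2-t_1}+D)$, then apply the same-point Harnack inequality to push the bound back to $[t_1/2,t_1]$, and finish with $|\Rm|\leq R$. The only cosmetic difference is that the paper additionally uses $t_1>t_2/2$ to replace $t_2/t_1$ by $2$, whereas you simply absorb this factor into the constant $K_1$.
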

 
\begin{proof}
Note that $x\in B\pl\overline{x},t_1, A \sqrt{t_2-t_1} + D \pr$.
As before, we can assume without loss of generality that $t_1>t_2/2$.
So we have by Harnack's inequality \eqref{eq:harnack}:
$$
 R(x,t_1)\leq 2 \exp\pl \frac{ (A \sqrt{t_2 - t_1} +D)^2}{2 (t_2 - t_1)} \pr (\rho(t_2)+1) =: C' (D, t_1, t_2, n).
$$
Again by Harnack's inequality (at $(x,t_1)$) we get that for $t \in [t_1 / 2, t_1]$
\[ R(x,t) \leq \frac{t_1}{t} R(x,t_1) \leq 2 R(x,t_1) \leq 2 C'. \]
The desired curvature bound follows using \eqref{eq:curvaturebound}.
\end{proof}

We are finally ready to show that if the scalar curvature is sufficiently close to $\rho (t_2)$ at some point $(\bar{x}, t_2)$, then the sectional curvatures must be pinched everywhere at some previous time.

\begin{proposition} \label{prop:epsilon}
Given $t_2 \in (0, \bar{T}_n]$ we can choose $t_2/2 < t_1 = t_1 (t_2, n) < t_2$ such that for every $\varepsilon>0$ there exists a  $\delta=\delta(\varepsilon,t_2,n)\in(0,1)$ with the following property: given any Ricci flow $g(t)$ as above for which there exist some $\overline{x}\in M$ with $R(\overline{x},t_2)<\rho(t_2)+\delta$, then: 
$$\left|\text{\rm Sec}_{g}(\cdot, t_1) -\frac{1}{n(n-1)}\rho(t_1) \right|<\varepsilon,$$
everywhere in $M$.
\end{proposition}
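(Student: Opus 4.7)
The plan is to argue by contradiction, pass to a pointed Cheeger-Gromov limit, and combine the strong minimum principle with a rigidity theorem to force the limit to be round. Fix $t_1 = t_1(t_2, n)$ as in Proposition \ref{prop:scalar}. If the statement fails, then there exist $\varepsilon > 0$ and a sequence of Ricci flows $g_k(t)$ as above on closed connected manifolds $M_k$, with points $\bar{x}_k, z_k \in M_k$ satisfying $R_{g_k}(\bar{x}_k, t_2) < \rho(t_2) + \delta(1/k, t_2, n)$ and $\left|\text{\rm Sec}_{g_k}(z_k, t_1) - \tfrac{1}{n(n-1)}\rho(t_1)\right| \geq \varepsilon$. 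Applying Proposition \ref{prop:scalar} with $\theta = 1/k$, we obtain $y_k \in B_k(\bar{x}_k, t_1, A\sqrt{t_2 - t_1})$ with $R_{g_k}(y_k, t_1) < \rho(t_1) + 1/k$, and Lemma \ref{le:curvbd} supplies uniform $|\Rm|_{g_k}$-bounds on $B_k(y_k, t_1, D) \times [t_1/2, t_1]$ for every $D > 0$.

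By Hamilton's compactness theorem, a subsequence converges in the pointed smooth Cheeger-Gromov sense to a complete Ricci flow $(M_\infty, g_\infty(t), y_\infty)$ on $[t_1/2, t_1]$ with nonnegative sectional curvature, with $(M_\infty, g_\infty(t)) \times \mathbb{R}^2$ of nonnegative isotropic curvature, with $R_{g_\infty} \geq \rho(t)$ everywhere, and with $R_{g_\infty}(y_\infty, t_1) = \rho(t_1)$. The function $f := R_{g_\infty} - \rho \geq 0$ vanishes at $(y_\infty, t_1)$ and satisfies
\begin{equation*}
\partial_t f \geq \Delta f + \tfrac{2}{n}(R_{g_\infty} + \rho)\, f,
\end{equation*}
so the strong minimum principle on the connected manifold $M_\infty$ (with locally bounded zeroth-order coefficient) forces $f \equiv 0$ on $M_\infty \times [t_1/2, t_1]$. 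Then $R_{g_\infty}(\cdot, t) \equiv \rho(t)$ is spatially constant; substituting into $\partial_t R = \Delta R + 2|\text{\rm Ric}|^2$ and comparing with $\rho'(t) = \tfrac{2}{n}\rho^2$ yields $|\mathring{\text{\rm Ric}}|^2 \equiv 0$, so $g_\infty(t)$ is Einstein for every $t \in [t_1/2, t_1]$.

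In particular $\text{\rm Ric}_{g_\infty(t_1)} = \tfrac{\rho(t_1)}{n}\, g_\infty(t_1)$ with $\rho(t_1) > 0$, so Myers's theorem makes $M_\infty$ compact. Since $g_\infty(t_1)$ is a closed Einstein metric satisfying the hypothesis that $(M_\infty, g_\infty(t_1)) \times \mathbb{R}^2$ has nonnegative isotropic curvature, the Ricci flow issuing from it is a homothetic rescaling, yet its normalization must converge to a round metric by Brendle-Schoen \cite{BreSch09}; hence $g_\infty(t_1)$ itself has constant sectional curvature $\tfrac{\rho(t_1)}{n(n-1)}$. Finally, since $M_\infty$ is closed and $M_k$ is connected, the pointed Cheeger-Gromov convergence provides, for $k$ large, smooth diffeomorphisms $\phi_k \colon M_\infty \to M_k$ (the images being clopen in $M_k$) with $\phi_k^* g_k(t_1) \to g_\infty(t_1)$ in $C^\infty$. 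After a further subsequence $\phi_k^{-1}(z_k) \to z_\infty \in M_\infty$, uniform $C^2$-convergence yields $\text{\rm Sec}_{g_k}(z_k, t_1) \to \tfrac{\rho(t_1)}{n(n-1)}$, contradicting the $\varepsilon$-gap. The main obstacle is converting the local control around $y_k$ (from Proposition \ref{prop:scalar} and Lemma \ref{le:curvbd}) into global control on all of $M_k$; this is handled by promoting the limit $M_\infty$ to a compact Einstein and hence round manifold via Myers, and then using connectedness of $M_k$ to upgrade the Cheeger-Gromov embedding to a diffeomorphism onto all of $M_k$.
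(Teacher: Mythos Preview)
Your argument has a genuine gap when you invoke Hamilton's compactness theorem: that theorem requires, in addition to the curvature bounds from Lemma~\ref{le:curvbd}, a uniform lower bound on the injectivity radius of $(M_k, g_k(t_1))$ at $y_k$, and nothing in the hypotheses prevents collapsing. For instance, the round metrics on $S^3/\mathbb{Z}_p$ with $R\equiv 6$ satisfy all the assumptions (for any $\delta$), yet as $p\to\infty$ they collapse and admit no smooth $n$-dimensional pointed limit. The paper circumvents exactly this obstacle by first pulling the flows back to balls in $T_{z_k}M_k$ via the exponential map, which needs only a \emph{conjugacy}-radius bound (a consequence of the curvature bound alone); on these local models the strong maximum principle yields the Einstein condition, and an inductive covering argument then propagates the Ricci pinching across a ball of a fixed radius $\tfrac14 D$. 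Myers's theorem forces $M_k$ to coincide with that ball, and only after passing to the universal covers $\widetilde{M}_k\cong S^n$ and invoking \cite{PeTu99} to secure an injectivity-radius bound does the paper extract a genuine smooth Cheeger--Gromov limit.

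There is also a smaller issue at the end: your appeal to Brendle--Schoen \cite{BreSch09} for the limit metric $g_\infty$ requires strictly positive PIC2, but only the nonstrict inequality survives in the limit, so their convergence theorem does not apply. The paper instead uses Brendle's rigidity theorem \cite{Bre10} (a compact Einstein manifold with nonnegative isotropic curvature is locally symmetric) together with the topological fact $\widetilde{M}_\infty\cong S^n$ to conclude that the limit is round.
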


\begin{proof}
We first choose $t_1 = t_1 (t_2, n)$ according to Proposition \ref{prop:scalar} and for each positive integer $k$, let $\delta_{k}=\delta(1/k,t_{2},n)$ be as in Proposition \ref{prop:scalar} for $\theta=\frac{1}{k}$. 
Assume now that the lemma was false for some $\varepsilon > 0$.
Then we can find a sequence of Ricci flows $(M_k, g_k(t))$ as above and points $\overline{x}_k \in M_k$ such that $R(\overline{x}_k, t_2) < \rho (t_2) + \delta_k$, but such that the conclusion of the lemma fails for every $k$.
By Proposition \ref{prop:scalar} we can find points $y_k \in B_k(\bar{x}_k, t_1, A\sqrt{t_2 - t_1}) \subset M_k$ such that
$$ R( y_k, t_1 ) < \rho (t_1) + \frac{1}{k}. $$
Applying Lemma \ref{le:curvbd} for $D = 8 \pi (\rho(t_1) / 2n(n-1))^{-1/2}$ gives us that
\begin{equation} \label{eq:curvbound}
 |\Rm|_{g_k (t)} < C (D, t_1, t_2, n) \,\, \text{on} \,\, B_k(y_k, t_1, D) \times [t_1/2, t_1].
\end{equation}
In particular, the conjugacy radius on $B_k(y_k, t_1, \frac12 D)$ is bounded from below by some uniform constant $c = c(D, t_1, t_2, n) \in (0, \frac12 D )$.

We now prove the following claim.
\begin{Claim}
Consider a sequence $z_k \in B_k(y_k, t_1, \tfrac12 D)$ and assume that $R(z_k, t_1) \to \rho(t_1)$ as $k \to \infty$.
Then
$$ \sup_{B_k(z_k, t_1, c/2)} \left|\text{\rm Ric}_{g_k(t_1)}  -\frac{1}{n}\rho(t_1)g_k (t_1) \right| \to 0$$
and, in particular,
$$ \sup_{B_k(z_k, t_1, c/2)} \left| R_{g_k(t_1)}  - \rho(t_1) \right| \to 0$$
as $k \to \infty$.
\end{Claim}

\begin{proof}[Proof of Claim]
Let $\overline{g}_k (t)$ be the family of metrics that arise from pulling back $g_k(t)$ via the exponential map
$$ \exp_{z_k, g_k(t)} : B (0, c) \subset T_{z_k} M_k \longrightarrow M_k. $$
By (\ref{eq:curvbound}) and Shi's estimates, we conclude that $\overline{g}_k(t)$ smoothly subconverge to a limiting Ricci flow $\overline{g}_\infty (t)$, $t \in (t_1/2, t_1]$ on compact subsets.
This Ricci flow satisfies
$$ R_{\overline{g}_\infty(t) } \geq \rho(t) \qquad \text{on} \qquad M_k $$
$t \in (t_1/ 2, t_1]$ and
$$ R_{\overline{g}_\infty(t_1)} ( 0 ) = \rho (t_1). $$
By the strong maximum principle applied to (\ref{eq:scalar}), we then conclude that $\overline{g}_\infty (t_1)$ is Einstein with scalar curvature $\rho(t_1)$.
This proves the claim.
\end{proof}

Using the claim, we can now show by induction that for all $j \in \mathbb{N}$ with $j c / 4 < \frac12 D$ we have
$$ \sup_{B_k(z_k, t_1, j c/4)} \left|\text{\rm Ric}_{g_k(t_1)}  -\frac{1}{n}\rho(t_1)g_k(t_1) \right| \to 0$$
So we get that
$$ \sup_{B_k(z_k, t_1, \frac14 D )} \left|\text{\rm Ric}_{g_k(t_1)}  -\frac{1}{n}\rho(t_1)g_k(t_1) \right| \to 0$$
So for large enough $k$ we have $\textrm{Ric}_{g_k (t_1) } > \frac12 \cdot \frac1n \rho(t_1) g_k (t_1)$ on $B_k(z_k, t_1, \frac14 D )$
By Myer's Theorem this implies
\[ \textrm{diam}_{g_k (t_1)}  B(z_k, t_1, \tfrac14 D ) < \pi \pl \frac1{2n(n-1)} \rho (t_1) \pr^{-1/2} = \frac 18 D. \]
So $M_k= B_k(z_k, t_1, \frac14 D)$ for large $k$ and we get that
\begin{equation} \label{eq:Riccipinching} 
\sup_{M_k} \left|\text{\rm Ric}_{g_k(t_1)}  -\frac{1}{n}\rho(t_1)g_k(t_1) \right| \to 0. 
\end{equation}

Consider the universal covering $(\widetilde{M}_k, \widetilde{g_k} (t_1) )$ of $(M, g_k(t_1))$ and note that $\widetilde{M}_k$ is diffeomorphic to the standard sphere $S^n$.
We are now in a position to apply Theorem 0.4 in \cite{PeTu99}, and conclude that the injectivity radius of $(\widetilde{M}_k, \widetilde{g_k} (t_1) )$ is uniformly bounded from below, independently of $k$.
So, after passing to a subsequence, $(\widetilde{M}_k, \widetilde{g_k} (t_1) )$ converges to a smooth Riemannian manifold $(\widetilde{M}_\infty, \widetilde{g}_\infty)$ in the smooth Cheeger-Gromov sense.
Due to (\ref{eq:Riccipinching}), the metric $\widetilde{g}_\infty$ is Einstein.
So by \cite{Bre10}, $(\widetilde{M}_\infty, \widetilde{g}_\infty)$ has to be a symmetric space.
Since $\widetilde{M}_k$ is diffeomorphic $S^n$, we have $\widetilde{M}_\infty$ is diffeomorphic to $S^n$.
This implies that $(\widetilde{M}_\infty, \widetilde{g}_\infty)$ is the round sphere, which finishes the proof.
\end{proof}

\begin{proof}[\bf End of Proof of Theorem \ref{thmmain}] 
We argue by contradiction. 
By results of Brendle-Schoen \cite{BreSch09}, $M$ is diffeomorphic to a spherical space form.
Fix the topology $M$ and the constant $\eta > 0$ and suppose there exists a sequence of metrics $g_k$ satisfying the assumptions of Theorem \ref{thmmain} with extinction time $T_k$ approaching $\frac{1}{2(n-1)}$, but such that for any $k$ there is no metric $g$ of constant curvature one on $M$ with the property that $\Vert g_k - g \Vert_{\mathcal{C}^0} < \eta$.

Since $\lim_{k \to \infty} T_k = \frac1{2(n-1)}$, we can choose constants $0 < \delta_k < 1$ such that $\lim_{k \to \infty} \delta_k = 0$ and for large $k$ we have $T_k \geq \frac1{2(n-1)} - \tau (\delta_k)$, where $\tau$ is the constant from Proposition \ref{prop:max}.
Next, we can choose a sequence of times $t_{2, k} \in\pl0,T_{n}\pr$ and a sequence of constants $\varepsilon_k > 0$ such that $\lim_{k \to \infty} t_{2,k} = 0$ and $\lim_{k \to \infty} \varepsilon_k = 0$ and such that for large $k$ we have $\delta_k < \delta ( \varepsilon_k, t_{2,k}, n)$, where $\delta$ is the constant from Proposition \ref{prop:scalar}.

We can now apply Proposition \ref{prop:max} and find points $x_k\in M$ such that for large $k$:
$$R_{g_{k}}({x}_k,t_{2,k})<\rho(t_{2,k})+\delta_k.$$
By Proposition \ref{prop:epsilon}, there exist times $0 < t_{1,k} < t_{2,k}$ such that for large $k$ large enough we have:
$$
\left|\text{\rm Sec}_{g_{k}}(\cdot, t_{1,k}) -\frac{1}{n(n-1)}\rho(t_{1,k}) \right|<\varepsilon_k, 
$$
everywhere on $M$.
Since $\lim_{k \to \infty} t_{2,k} = 0$, we also have $\lim_{k \to \infty} t_{1,k} = 0$.

A standard argument, $e.g.$, Section 10.5.4 of \cite{Per06}, then implies that the metric $g_k(t_{1,k})$ must be $\mathcal{C}^{0}$-close to a metric of constant sectional curvature \linebreak[1] $\frac{1}{n(n-1)}\rho(t_{1,k}) \to 1$.
We will now compare the distance functions $d_{g_k (t)}$ at times $0$ and $t_{1,k}$.
By our curvature assumptions and Harnack's inequality:
$$
|\Rm_{g_k}| (\cdot ,t) \leq R_{g_k}(\cdot ,t) \leq \frac{t_{1,k}}{t} R_{g_k}(\cdot,t_{1,k}) \leq \frac{C(n)}{t}, 
$$
on $M \times (0, t_{1,k}]$.
Given any two points $x,y$ on $M$, by distance distortion estimates of Hamilton \cite{Ham95}:
\[d_{g_k(t)}(x,y)\geq d_{g_k(0)}(x,y) -C'(n) \int^t_0 \sqrt{\frac{1}{s}} ds,\]
which, together with the fact that distances shrink under under positive curvature, yields:
$$ d_{g_k(0)}(x,y) \geq d_{g_k(t_k)}(x,y) \geq d_{g_k(0)}(x,y) -C''(n) \sqrt{t_k}. $$
It follows that $(M, g_k)$ converges to a spherical space form of constant curvature one in the Gromov-Hausdorff sense.

Using Theorem 10.8.18 of \cite{BuBuIv01} (see also Remark 10.8.19) and the fact that $g_k$ has positive sectional curvature, we can improve the Gromov-Hausdorff convergence to $\mathcal{C}^0$-convergence, contradicting our assumptions on $g_k$ for large $k$.
\end{proof}

\section{Proof of Theorems \ref{thmapp} and \ref{thmapp2}}

Following Marques and Neves \cite{MaNe12}, we let $g$ flow by Ricci flow, with maximal existence say $T>0$, and for each $t\in [0,T)$, we calculate the min-max invariant $W(g(t))$. From \cite{MaNe12}, we know that:

\begin{itemize}
\item[(i)] $W(g(t)) \geq W(g(0))- 16\pi t$,
\item[(ii)] $\lim_{t\nearrow T} W(g(t))=0$. 
\end{itemize}
Hence, given $\eta > 0$ and assuming $W(g)>4\pi- 16 \pi \tau(\eta)$, we obtain $T > \frac{1}{4} - \tau(\eta)$ from (i) and (ii).
Therefore, the conclusion follows from Theorem~\ref{thmmain}.

Similarly, using the inequality $\mathcal{A}(g(t))\geq \mathcal{A}(g(0))-8\pi t$, $t\in[0,T]$, proved in \cite{BBEN10}, and the fact that $\lim_{t\nearrow T} \mathcal{A}(g(t))=0$, Theorem \ref{thmapp2} follows from Theorem \ref{thmmain}.

\section{Appendix}

Consider a Ricci flow as in Section \ref{sec:profmain} and let $t_2$ be some chosen time. Suppose $\overline{x}\in M$ is a point such that  $R (\overline{x}, t_2) < \rho (t_2) + 1 $ and let $A=A(n)> 1$ a real constant to be chosen. The Harnack inequality \eqref{eq:harnack} and inequality \eqref{eq:curvaturebound} imply that for all $t \in (t_2/2, t_2)$ we have the following curvature bounds
$$
|\Rm_{g(t)}|, \; |\text{\rm Ric}_{g(t)}| <K(A, n) \qquad \text{on} \qquad B\pl \overline{x},t, A\sqrt{t_2-t} \pr.
$$

We want to choose a time $t_2/ 2 <t_1=t_1(t_2,n)<t_2$, for which there will always exist a nonnegative function $u(x,t)$ on $[t_1,t_2)$ satisfying:
\begin{itemize}
\item[(i)]$\partial_t u \leq \Delta_{g(t)} u $ in the barrier sense, $i.e.,$ $u(x,t)$ is a subsolution of the heat equation;
\item[(ii)] for $t\in[t_1,t_2)$, the function $u(x,t)$ is supported in $B\pl \overline{x},t, A\sqrt{t_2-t}\pr$;
\item[(iii)] $0\leq u(\cdot,t)\leq 1$ and  $u(\bar{x},t)= \sqrt{t_{2}-t}$.
 \end{itemize}

Our Ansatz for $u$ is:
\[ u(x,t) = \sqrt{t_2-t} \, \varphi \pl{\alpha \frac{d^2_{g(t)} (\overline{x}, x)}{t_2-t} }\pr, \]
where $0 < \alpha < 1$ is a constant to be chosen and $\varphi : [0, \infty) \to [0, \infty)$ is a smooth real function with the following properties
\begin{itemize}
\item $\varphi \equiv 1$ on $[0,1]$,
\item $\varphi \geq \frac12$ on $[0,2]$,
\item $\varphi' \leq 0$ on $[0, \infty)$,
\item $\varphi'' \geq 0$ on $[2, \infty)$,
\item $\varphi |_{[3, \infty)} \equiv 0$.
\end{itemize}
It is clear that such a $\varphi$ can always be chosen.
Note that condition (iii) trivially holds.

We first argue that we can choose the constant $0 < \alpha < 1$ small enough such that for all $r \geq 0$
\begin{equation}\label{eq:varphi}
 - \frac12 \varphi (r) + \frac12  r \varphi' (r) \leq 4\alpha r \varphi'' (r) + 2 \alpha n  \varphi' (r), 
 \end{equation}
Indeed, on $[0,2]$ the left-hand side is bounded from above by $- \frac14$ and the absolute value of the right-hand side can be made arbitrarily small by choosing $\alpha$ small enough.
On the other hand, assuming $\alpha < \frac{1}{2n}$, we find that on $[2,3]$
\[ - \frac12 \varphi (r) + \frac12  r \varphi' (r) \leq \frac12 r \varphi' (r) \leq 2 \alpha n \varphi' (r) \leq 4\alpha r \varphi'' (r) + 2 \alpha n  \varphi' (r). \]
Lastly, on $[3, \infty)$ the inequality (\ref{eq:varphi}) trivially holds.
Fix $\alpha$ for the rest of the proof.

We now choose and fix $1 < A < \infty$ large enough such that $\alpha A^2 > 3$.
Then condition (ii) holds.
Note that the choice of $A$ also determines the constant $K= K(A,n)$.

It remains to choose $t_1$ close enough to $t_2$ such that condition (i) is satisfied, \textit{i.e.}, that $\partial_t u \leq \Delta_{g(t)} u$ in the barrier sense.
We start by computing the time-derivative. In what follows, we will always write $\tau = t_2-t$ and $d = d_{g(t)} (\overline{x}, x)$.
\begin{align*} \partial_t u(x,t) & = - \frac1{2\sqrt{\tau}} \varphi + \sqrt{\tau} \partial_t \Big( \alpha \frac{d^2}{\tau} \Big) \varphi'. \\
&= - \frac1{2\sqrt{\tau}} \varphi  + \alpha \sqrt{\tau} \frac{d^2}{\tau^2} \varphi' + 2 \alpha \sqrt{\tau} \frac{d \partial_t d}{\tau} \varphi' \\
& \leq  - \frac1{2\sqrt{\tau}} \varphi + \alpha \frac{d^2}{\tau^{3/2}} \varphi' - 2 \alpha K  \sqrt{\tau} \frac{d^2}{\tau} \varphi',
\end{align*}
where we are using that $\varphi'\leq0$ and $\partial_t d \geq - K d$.
We compute the gradient directly as follows:
\[ \nabla u (x,t) =  \sqrt{\tau}  2 \alpha \frac{d}{\tau} \varphi' \nabla_x d_{g(t)} (\overline{x}, x). \]
Using the fact that $|\nabla d| = 1$, $\text{\rm Ric}_{g(t)} > 0$ and $\varphi' \leq 0$, the Laplacian can be estimated as follows in the barrier sense:
\begin{align*}
\Delta u(x,t) &= \sqrt{\tau}  4 \alpha^2 \frac{d^2}{\tau^2} \varphi'' + \sqrt{\tau}  2 \alpha \frac{1}{\tau} \varphi' + \sqrt{\tau}  2 \alpha \frac{d}{\tau} \varphi' \Delta d_{g(t)} (x_0, x) \\
&\geq 4\alpha^2  \frac{d^2}{\tau^{3/2}} \varphi'' + 2\alpha  \frac{1}{\sqrt{\tau}} \varphi' +  \sqrt{\tau}  2 \alpha \frac{d}{\tau} \varphi' \frac{n-1}{d} \\
&\geq 4 \alpha^2  \frac{d^2}{\tau^{3/2}} \varphi'' + 2 \alpha  \frac1{\sqrt{\tau}} \varphi' + 2\alpha (n-1)  \frac1{\sqrt{\tau}}  \varphi' \\
 &= 4 \alpha^2  \frac{d^2}{\tau^{3/2}} \varphi'' + 2\alpha n  \frac1{\sqrt{\tau}}  \varphi'. 
\end{align*}
So, in order to satisfy (i), we must have
\[ - \frac1{2\sqrt{\tau}} \varphi + \alpha \frac{d^2}{\tau^{3/2}} \varphi' - 2 \alpha K  \sqrt{\tau} \frac{d^2}{\tau} \varphi'
\leq 4 \alpha^2  \frac{d^2}{\tau^{3/2}} \varphi'' + 2\alpha n  \frac1{\sqrt{\tau}}  \varphi' .  \]
We multiply by $\sqrt{\tau}$:
\[ - \frac1{2} \varphi + \alpha \frac{d^2}{\tau} \varphi' - 2 K  \tau  \alpha \frac{d^2}{\tau} \varphi'
\leq 4 \alpha^2  \frac{d^2}{\tau} \varphi'' + 2\alpha n\varphi' .  \]
Let us now assume that $\tau$ is small enough such that $2K \tau < \frac12$.
This assumption determines $t_1 = t_1 (t_2, n)$.
We therefore need to check whether
\[ - \frac1{2} \varphi + \frac12 \alpha \frac{d^2}{\tau} \varphi' 
\leq 4 \alpha^2  \frac{d^2}{\tau} \varphi'' + 2\alpha n\varphi' .  \]
This inequality follows from (\ref{eq:varphi}) for $r = \alpha \frac{d^{2}}{\tau}$.
So condition (i) holds for $t_1$ sufficiently close to $t_2$.

\bibliography{bib} 
\bibliographystyle{amsalpha}

\end{document}